\documentclass[12pt]{article}
\usepackage{color}
\definecolor{darkblue}{rgb}{0.00,0.25,0.50}
\usepackage[colorlinks,filecolor=blue,citecolor=darkblue]{hyperref}

\setlength{\textheight}{53pc} \setlength{\textwidth}{38pc}

\topmargin=-18true mm

\oddsidemargin=0.52cm

\evensidemargin=0.26cm

\usepackage{amssymb,amsmath, amsfonts, amsthm}
\usepackage{url}
\usepackage{amscd, color}

\newtheorem{thrm}{Theorem}

\newtheorem{cor}[thrm]{Corollary}
\theoremstyle{definition}

\sloppy
\usepackage[english]{babel}
\begin{document}

\thispagestyle{empty}

\begin{center}
\textbf{APPROXIMATION OF CLASSES OF ANALYTIC FUNCTIONS BY DE LA VALL\'{E}E POUSSIN SUMS IN
UNIFORM METRIC$^\ast$\footnote{$^\ast\,$Supported in part by the Ukrainian Foundation for
Basic Research (project no. $\Phi$35/001)}}
\end{center}
\vskip0.5cm
\begin{center}
A.S. SERDYUK, Ie.Yu. OVSII and A.P. MUSIENKO
\end{center}
\vskip0.5cm

\abstract{In this paper asymptotic equalities are found for the least upper bounds of
deviations in the uniform metric of de la Vall\'{e}e Poussin sums on classes of $2\pi
$-periodic $(\psi ,\beta )$-differentiable functions admitting an analytic continuation into
the given strip of the complex plane. As a consequence, asymptotic equalities are obtained
on classes of convolutions of periodic functions generated by the Neumann kernel and the
polyharmonic Poisson kernel.}

\vskip0.2cm {\emph{MSC 2010}: 41A30 \vskip1cm

Let $L_s$, $1\leqslant s< \infty,$ be the space of $s$th power summable $2\pi $-periodic
functions $f$ with the norm $\|f\|_{s}:=\|f\|_{L_s}=\bigg(\int\limits_{0}^{2\pi
}|f(t)|^s\,dt\bigg)^{1/s},$ let $L_\infty$ be the space of measurable essentially bounded
$2\pi $-periodic functions $f$ with the norm $\|f\|_{\infty}:=\|f\|_{L_\infty}=\mathop{\rm
ess\,sup}\limits_{t\ }|f(t)|$ and let $C$ be the space of continuous $2\pi $-periodic
functions $f$ with the norm $\|f\|_C=\max\limits_{t}|f(t)|.$

Suppose that $f\in L_1$ and
    $$S[f]:=\frac{a_0(f)}{2}+\sum\limits_{
    k=1}^{\infty}(a_k(f)\cos kx+b_k(f)\sin kx)$$
is the Fourier series of $f$. If a sequence of real numbers $\psi (k)$, $k\in \mathbb{N}$
and a real number $\beta$ ($\beta \in \mathbb{R}$) are such that there exists a function
$\varphi \in L_1$ with Fourier series
    $$S[\varphi ]=\sum\limits_{
    k=1}^{\infty}\frac{1}{\psi (k)}\bigg(a_k(f)\cos\Big(kx+\frac{\beta\pi }{2}\Big)+
    b_k(f)\sin\Big(kx+\frac{\beta\pi }{2}\Big)\bigg),$$
then this function $\varphi $ is called (see \cite[p. 120]{Stepanets_VSP}) the $(\psi
,\beta)$-derivative of the function $f(\cdot)$ and is denoted by $f^\psi _{\beta}(\cdot).$
If $f^\psi _{\beta}\in \mathfrak{N}\subset L_1,$ then we write  $f\in L^\psi
_{\beta}\mathfrak{N}.$ Moreover, we set $C^\psi _{\beta}\mathfrak{N}=C\cap L^\psi
_{\beta}\mathfrak{N}.$

By $D_q$ we denote the set of sequences $\psi (k)> 0$, $k\in \mathbb{N},$ such that
\begin{equation}\label{24.05.11-18:27:57}
    \lim\limits_{k\to\infty}{\frac{\psi (k+1)}{\psi (k)}=q},\ \ \ q\in(0,1).
\end{equation}
It is known \cite[p. 130]{Stepanets_VSP} that the class $C^\psi _{\beta}\mathfrak{N}$ with
$\psi \in D_q$ consists of $2\pi $-periodic functions that admit an analytic continuation
into the strip $|\text{Im}\ z|\leqslant \ln {1/q}$ of the complex plane.

As follows from proposition 8.3 \cite[p. 127]{Stepanets_VSP}, if $\psi \in D_q$,
$q\in(0,1),$ $\beta \in \mathbb{R}$ and $\mathfrak{N}\subset L_{s},$ $1\leqslant s\leqslant
\infty,$ then $C^\psi _{\beta}\mathfrak{N}$ is the class of functions $f(x)$ representable
at each point $x\in\mathbb{R}$ by the equality
    \begin{equation}\label{1.06.11-15:14:39}
    f(x)=\frac{a_0(f)}{2}+\frac{1}{\pi }\int\limits_{
    -\pi }^{\pi }f^\psi _{\beta}(x-t)\Psi_{\beta}(t)\,dt,
    \end{equation}
where
    \begin{equation}\label{21.06.11-14:33:05}
    \Psi_{\beta}(t)=\sum\limits_{k=1}^{\infty}\psi (k)\cos\Big(k t-\frac{\beta\pi }{2}\Big).
    \end{equation}
An example of the class $C^\psi _{\beta }\mathfrak{N}$ for which $\psi \in D_q$,
$q\in(0,1)$, is the class of Poisson integrals, i.e. a class consisting of functions of the
form
    \begin{equation}\label{26.09.11-13:26:03}
    f(x)=A_0+\frac{
    1}{\pi }\int\limits_{-\pi }^{\pi }\varphi (x-t)P_{q,\beta }(t)\,dt,
    \ \ A_0\in\mathbb{R},\ \ \varphi \in\mathfrak{N},
    \end{equation}
where
    $$P_{q,\beta }(t)=
    \sum\limits_{k=1}^{\infty}q^k\cos \Big(kt-\frac{\beta \pi }{2}\Big),\ \
    q\in (0,1),\ \ \beta \in \mathbb{R},$$
is the Poisson kernel with parameters $q$ and $\beta .$ In this case the class $C^\psi
_\beta \mathfrak{N}$ we will denote by $C^q _\beta \mathfrak{N}.$

In the current paper we take as $\mathfrak{N}$ the sets
    $$U_s^0=\{\varphi \in L_s:\ \ \|\varphi \|_{s}\leqslant 1,\ \ \varphi \perp 1\},\ \ 1\leqslant s\leqslant \infty,$$
and
    $$H_\omega =\{\varphi \in C:\ \ \omega (\varphi ;t)\leqslant \omega (t),\ t\geqslant 0\},$$
where $\omega (\varphi ;t)$ is the modulus of continuity of $\varphi $ and $\omega (t)$ is a
fixed majorant of the modulus of continuity type. In what follows, we use the notation:
    $$C^\psi _{\beta,s}=C^\psi _{\beta}U_s^0,\ \ \
    C^q _{\beta,s}=C^q _{\beta }U_s^0.$$

Denote by $V_{n,p}(f;\cdot)$ the de la Vall\'{e}e Poussin sums \cite{Vallee Poussin} of the
function $f\in L_1:$
    \begin{equation}\label{19.07.11-15:09:23}
    V_{n,p}(f;x)=\frac{1}{p}\sum\limits_{k=n-p}^{n-1}S_k(f;x),
    \end{equation}
where $S_k(f;x)$ is the $k$th partial sum of the Fourier series of $f$, and $p=p(n)$ is a
given natural parameter, $p\leqslant n.$

The aim of the present work is to obtain the asymptotic equalities as \mbox{$n-p\to\infty$}
for the quantity
\begin{equation}\label{30.05.11-16:06:08}
    \mathcal{E}(C^\psi _{\beta }\mathfrak{N};V_{n,p})=\sup\limits_{
    f\in C^\psi _{\beta }\mathfrak{N}}{\|f(\cdot)-V_{n,p}(f;\cdot)\|_C,}
\end{equation}
where $\psi \in D_q,$ $q\in(0,1),$ and $\mathfrak{N}=U_s^0,$ $1\leqslant s\leqslant \infty,$
or $\mathfrak{N}=H_\omega .$

This paper is nearly related to works \cite{RUKASOV-2003}, \cite{Rukasov_Chaichenko},
\cite{Serdyuk V-P}, \cite{Serdyuk 2005}, \cite{Serdyuk 2010}, \cite{Serdyuk_Ovsii},
\cite{Serdyuk_Ovsii_Zb} and \cite{STEANETS_SERDYUK}. In \cite{Serdyuk_Ovsii_Zb} the
asymptotic equalities were obtained for $\mathcal{E}(C^\psi _{\beta,s };V_{n,p})$,
$1\leqslant s\leqslant \infty$ and $\mathcal{E}(C^\psi _{\beta}H_\omega ;V_{n,p})$ in the
case where the sequence $\psi (k)$, that defines the classes, satisfies the condition
    $$\lim\limits_{k\to\infty}{\frac{\psi (k+1)}{\psi (k)}}=0\ \ \ (\psi \in D_0).$$
This restriction on $\psi $ implies $C^\psi _{\beta,s }$ and $C^\psi _{\beta}H_\omega$ are
the classes of entire functions. The case $\psi \in D_q$, $q\in(0,1)$ also hasn't been
omitted. The solution of the problem under consideration  for $\psi \in D_q$, $q\in(0,1),$
and $p=1$ ($V_{n,1}(f;\cdot)=S_{n-1}(f;\cdot)$) was found in \cite{Serdyuk 2005} and
\cite{STEANETS_SERDYUK}. The main idea of paper \cite{STEANETS_SERDYUK} (see, also,
\cite[Chapt. 5, Sect. 20]{Stepanets_VSP}) consists of reduction of the problem of obtaining
asymptotic equalities for $\mathcal{E}(C^\psi _\beta \mathfrak{N};S_{n-1})$ to solving an
analogous problem for the quantity $\mathcal{E}(C^q _\beta \mathfrak{N};S_{n-1})$ by means
of the next equalities:
\begin{equation}\label{4.08.11-16:05:59}
    \mathcal{E}(C^\psi _{\beta,s};S_{n-1})=\psi(n)\bigg(q^{-n}\mathcal{E}(C^q_{\beta ,s};S_{n-1}
    )+O(1)\frac{\varepsilon_n}{(1-q)^2}\bigg),\ \ 1\leqslant s\leqslant \infty,
\end{equation}
\begin{equation}\label{4.08.11-16:18:11}
    \mathcal{E}(C^\psi _{\beta}H_\omega ;S_{n-1})=\psi(n)\bigg(q^{-n}\mathcal{E}(C^q_{\beta}H_\omega ;S_{n-1}
    )+O(1)\frac{\varepsilon_n\omega (1/n)}{(1-q)^2}\bigg),
\end{equation}
where $\varepsilon _{n}:=\sup\limits_{k\geqslant n}{\big|\frac{ \psi
(k+1)}{\psi(k)}-q\big|,}$ and $O(1)$ are the quantities uniformly bounded in $n,$ $s,$ $q,$
$\psi (k)$ and $\beta .$ Since the asymptotic equalities for $\mathcal{E}(C^q_{\beta
,s};S_{n-1})$ and $\mathcal{E}(C^q_{\beta}H_\omega ;S_{n-1})$ are known (see, for example,
\cite[p. 295, 310]{Stepanets_VSP}, \cite[p. 1278]{Serdyuk 2005}), formulas
(\ref{4.08.11-16:05:59}) and (\ref{4.08.11-16:18:11}) allow us to obtain the asymptotic
equalities for $\mathcal{E}(C^\psi _{\beta,s};S_{n-1})$ and $\mathcal{E}(C^\psi
_{\beta}H_\omega ;S_{n-1})$, respectively, with arbitrary $\beta \in\mathbb{R}$ and $\psi
\in D_q,$ $q\in(0,1).$

As for the general case $p=1,2,\ldots,n$, the analogs of (\ref{4.08.11-16:05:59}) (with
$s=\infty$) and (\ref{4.08.11-16:18:11}) were derived in \cite{RUKASOV-2003} and have the
form
\begin{equation}\label{4.08.11-16:40:18}
    \mathcal{E}(C^\psi _{\beta ,\infty};V_{n,p})=\psi (n-p+1)\bigg(
    \frac{\mathcal{E}(C^q _{\beta ,\infty};V_{n,p})}{q^{n-p+1}}+
    O(1)\frac{\varepsilon_{n-p+1}}{(1-q)^4}\bigg),
\end{equation}
\begin{equation}\label{4.08.11-16:43:08}
    \mathcal{E}(C^\psi _\beta H_\omega ;V_{n,p})=\psi (n-p+1)\bigg(
    \frac{\mathcal{E}(C^q_\beta H_\omega ;V_{n,p})}{q^{n-p+1}}+
    O(1)\omega \Big(\frac{1}{n-p+1}\Big)\frac{\varepsilon _{n-p+1}}{(1-q)^4}\bigg),
\end{equation}
where $\psi \in  D_q$, $q\in(0,1),$ $\beta \in\mathbb{R},$
\begin{equation}\label{4.08.11-16:46:57}
    \varepsilon _{n-p+1}:=\sup\limits_{k\geqslant n-p+1}{\Big|\frac{ \psi (k+1)}{\psi
(k)}-q\Big|},
\end{equation}
$\omega (t)$ is an arbitrary modulus of continuity and $O(1)$ are the quantities uniformly
bounded in $n,$ $p,$ $q,$ $\psi $ and $\beta .$

By using the known asymptotic equalities as $n-p\to\infty$ of the quantities
$\mathcal{E}(C^q _{\beta ,\infty};V_{n,p})$ and $\mathcal{E}(C^q _\beta H_\omega ;V_{n,p})$
(see \cite{RUKASOV-2003} and \cite{Rukasov_Chaichenko}), \mbox{V. I. Rukasov} obtained from
(\ref{4.08.11-16:40:18}) and (\ref{4.08.11-16:43:08}) the next formulas that make up the
main result of paper \cite{RUKASOV-2003}:
\begin{equation}\label{4.08.11-16:58:12}
    \mathcal{E}(C^\psi _{\beta ,\infty};V_{n,p})=\frac{
    \psi (n-p+1)}{p}\bigg(\frac{4}{\pi (1-q^2)}$$
    $$+O(1)\Big(
    \frac{q^{p-1}}{(1-q^2)}+\frac{1}{(1-q)^3(n-p)}+\frac{p\varepsilon
    _{n-p}}{(1-q)^4}\Big)\bigg),
\end{equation}
\begin{equation}\label{4.08.11-17:06:21}
    \mathcal{E}(C^\psi _\beta H_\omega ;V_{n,p})=\frac{\psi (n-p+1)}{p}\bigg(
    \frac{2\theta_\omega }{\pi (1-q^2)}\int\limits_{0}^{\pi /2}\omega \Big(
    \frac{2t}{n-p}\Big)\sin t\,dt$$
    $$+O(1)\omega \Big(\frac{1}{n-p}\Big)\Big(
    \frac{q^{p-1}}{(1-q^2)}+\frac{1}{(1-q)^3(n-p)}+\frac{p\varepsilon
    _{n-p}}{(1-q)^4}\Big)\bigg),
\end{equation}
where $\psi \in D_q,$ $q\in(0,1),$ $\beta \in\mathbb{R}$, $\varepsilon
_{n-p}=\sup\limits_{k\geqslant n-p}{\big|\frac{ \psi (k+1)}{\psi (k)}-q\big|},$
$\theta_\omega \in[1/2,1]$ $(\theta_\omega =1$ if $\omega (t)$ is a convex (upwards) modulus
of continuity) and the quantities $O(1)$ are uniformly bounded in $n$, $p,$ $q,$ $\psi $ and
$\beta .$

Formula (\ref{4.08.11-16:58:12}), as well as formula (\ref{4.08.11-17:06:21}) in the case of
convexity of modulus of continuity $\omega (t)$, is an asymptotic equality as $n-p\to\infty$
only if the additional conditions
\begin{equation}\label{4.08.11-17:16:56}
    \lim\limits_{n\to\infty}{p}=\infty,
\end{equation}
\begin{equation}\label{4.08.11-17:17:19}
    \lim\limits_{n\to\infty}{p\varepsilon _{n-p}}=0,
\end{equation}
are fulfilled.

In the present work we have been able to do away with restrictions (\ref{4.08.11-17:16:56})
and (\ref{4.08.11-17:17:19}); this means that the strong asymptotic as $n-p\to\infty$ of
$\mathcal{E}(C^\psi _{\beta ,s};V_{n,p})$ and $\mathcal{E}(C^\psi _\beta H_\omega ;V_{n,p})$
is obtained for arbitrary $\psi \in D_q,$ $q\in(0,1),$ $1\leqslant s\leqslant \infty,$
$\beta \in\mathbb{R}$ even in the case where at least one of (\ref{4.08.11-17:16:56}) or
(\ref{4.08.11-17:17:19}) isn't carried out. It's essential to note that reasoning from
relations (\ref{4.08.11-16:40:18}) and (\ref{4.08.11-16:43:08}), restrictions
(\ref{4.08.11-17:16:56}) and (\ref{4.08.11-17:17:19}) can't be removed in principle. Thus,
for the final solution of our problem, it needs to improve formulas (\ref{4.08.11-16:40:18})
and (\ref{4.08.11-16:43:08}), which we shall do finding more refined estimates of the
remainder terms with subsequent generalisation of (\ref{4.08.11-16:40:18}) to the case of
arbitrary $s\in[1,\infty].$ The sought-for relations are provided by the following
assertion, which plays a key role in this paper.

\begin{thrm}\label{t1}
Let $\psi\in D_q$, $q\in(0,1),$ $1\leqslant s\leqslant \infty,$ $n,p\in\mathbb{N},$
$p\leqslant n,$ $\beta \in\mathbb{R}$ and let $\omega (t)$ be an arbitrary modulus of
continuity. Then, as $n-p\to\infty$,
\begin{equation}\label{23.05.11-17:21:26}
    \mathcal{E}(C^\psi _{\beta,s};V_{n,p})=\frac{\psi (n-p+1)}{p}\bigg(
    \frac{p\mathcal{E}(C^q _{\beta,s};V_{n,p})}{q^{n-p+1}}+
    O(1)\frac{\varepsilon _{n-p+1}}{(1-q)^2}\min\Big\{p, \frac{1}{1-q}\Big\}\bigg),
\end{equation}
\begin{equation}\label{23.05.11-17:23:37}
    \mathcal{E}(C^\psi _{\beta}H_\omega ;V_{n,p})=\frac{\psi (n-p+1)}{p}\bigg(
    \frac{p\mathcal{E}(C^q _{\beta}H_\omega ;V_{n,p})}{q^{n-p+1}}$$
    $$+O(1)\omega
    \Big(\frac{1}{n-p+1}\Big)\frac{\varepsilon _{n-p+1}}{(1-q)^2}\min\Big\{p, \frac{1}{1-q}\Big\}\bigg),
\end{equation}
where $\varepsilon _{n-p+1}$ is defined by $(\ref{4.08.11-16:46:57})$ and $O(1)$ are the
quantities uniformly bounded in $n,$ $p$, $q$, $s,$ $\psi, $ $\beta$ and $\omega .$
\end{thrm}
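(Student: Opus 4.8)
The plan is to reduce every quantity to the size of a single kernel. Writing $\varphi=f^\psi_\beta$, the representation \eqref{1.06.11-15:14:39} together with \eqref{19.07.11-15:09:23} gives
$$f(x)-V_{n,p}(f;x)=\frac1\pi\int_{-\pi}^{\pi}\varphi(x-t)\,\Psi_{n,p,\beta}(t)\,dt,\qquad \Psi_{n,p,\beta}(t)=\sum_{j=n-p+1}^{\infty}(1-\lambda_j)\psi(j)\cos\Big(jt-\tfrac{\beta\pi}{2}\Big),$$
where $\lambda_j$ is the de la Vall\'ee Poussin multiplier ($\lambda_j=1$ for $j\le n-p$, $\lambda_j=(n-j)/p$ for $n-p<j<n$, and $\lambda_j=0$ for $j\ge n$). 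Since $\Psi_{n,p,\beta}$ has zero mean and $f$ runs over $C^\psi_{\beta,s}$ exactly when $\varphi$ runs over $U_s^0$, duality yields $\mathcal E(C^\psi_{\beta,s};V_{n,p})=\tfrac1\pi\|\Psi_{n,p,\beta}\|_{s'}$ with $1/s+1/s'=1$, and the same holds for the Poisson kernel $P_{n,p,q,\beta}$ (the identical expression with $\psi(j)$ replaced by $q^{j}$). Setting $m=n-p+1$ and splitting $\Psi_{n,p,\beta}=\tfrac{\psi(m)}{q^{m}}P_{n,p,q,\beta}+E$, subadditivity of the supremum reduces \eqref{23.05.11-17:21:26} to the single estimate $\tfrac1\pi\|E\|_{s'}=O(1)\tfrac{\psi(m)}{p}\tfrac{\varepsilon_m}{(1-q)^2}\min\{p,\tfrac1{1-q}\}$.

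The Fourier coefficients of $E$ are $\psi(m)(1-\lambda_j)r_j$ with $r_j=\tfrac{\psi(j)}{\psi(m)}-q^{\,j-m}$; note that $r_m=0$, so the sum effectively starts at $j=m+1$. Writing $\tfrac{\psi(j)}{\psi(m)}=\prod_{i=m}^{j-1}\tfrac{\psi(i+1)}{\psi(i)}$ and using $\big|\tfrac{\psi(i+1)}{\psi(i)}-q\big|\le\varepsilon_m$ for $i\ge m$, a telescoping comparison of this product with $q^{\,j-m}$ gives $|r_j|\le\varepsilon_m\,(j-m)\,(q+\varepsilon_m)^{\,j-m-1}$. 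For $n-p$ large enough that $\varepsilon_m\le(1-q)/2$ — guaranteed as $n-p\to\infty$ because $\psi\in D_q$ — one has $1-(q+\varepsilon_m)\ge(1-q)/2$, so every geometric sum below retains the factor $(1-q)$ up to an absolute constant, and I may work with $|r_j|\le c\,\varepsilon_m(j-m)q^{\,j-m-1}$.

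To bound $\|E\|_{s'}$ uniformly in $s$ I use $\|E\|_{s'}\le 2\pi\|E\|_\infty$, valid for every $s'\ge1$, and estimate $\|E\|_\infty\le\psi(m)\sum_{j\ge m}(1-\lambda_j)|r_j|$ in two ways. Crudely bounding $1-\lambda_j\le1$ and summing $\sum_{l\ge0}l\,q^{l-1}=(1-q)^{-2}$ gives $\|E\|_\infty\le c\,\psi(m)\varepsilon_m(1-q)^{-2}$. On the other hand, on the transition block $m\le j\le n-1$ one has the gain $1-\lambda_j=(j-m+1)/p$, so that part is at most $\tfrac{c}{p}\psi(m)\varepsilon_m\sum_{l\ge0}(l+1)l\,q^{l-1}=\tfrac{c}{p}\psi(m)\varepsilon_m\cdot\tfrac{2}{(1-q)^{3}}$, while the tail $j\ge n$ (coefficient $1$) is $\le c\,\psi(m)\varepsilon_m\sum_{l\ge p-1}l\,q^{l-1}$, which by the elementary fact that $x^{2}e^{-x}$ is bounded is again $\le c\,\psi(m)\varepsilon_m\big(p(1-q)^3\big)^{-1}$. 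Taking the smaller of the two estimates produces exactly the factor $\min\{1,(p(1-q))^{-1}\}=\tfrac1p\min\{p,(1-q)^{-1}\}$ and yields \eqref{23.05.11-17:21:26}.

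For \eqref{23.05.11-17:23:37} the same splitting gives $\big|\mathcal E(C^\psi_\beta H_\omega;V_{n,p})-\tfrac{\psi(m)}{q^m}\mathcal E(C^q_\beta H_\omega;V_{n,p})\big|\le\sup_{\varphi\in H_\omega}\|\tfrac1\pi E*\varphi\|_C$. Here I exploit that $E$ annihilates all harmonics of order $<m$: if $T_{m-1}$ denotes the best uniform trigonometric approximant to $\varphi$ of degree $m-1$, then $E*T_{m-1}=0$, so $E*\varphi=E*(\varphi-T_{m-1})$, and by Jackson's theorem $\|\varphi-T_{m-1}\|_C\le c\,\omega(1/m)$ for $\varphi\in H_\omega$. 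Hence $\sup_{\varphi\in H_\omega}\|\tfrac1\pi E*\varphi\|_C\le c\,\omega\big(1/(n-p+1)\big)\,\|E\|_1\le 2\pi c\,\omega\big(1/(n-p+1)\big)\,\|E\|_\infty$, and the kernel bound of the previous paragraph finishes the proof. The main obstacle is precisely that sharp two-way bound on $\|E\|_\infty$: extracting the $1/p$ gain from the transition coefficients while controlling the high-frequency tail uniformly as $q\to1$, thereby reaching the exponents $(1-q)^{-2}$ and $(1-q)^{-3}$ — rather than the $(1-q)^{-4}$ of \eqref{4.08.11-16:40:18} — together with the cut-off $\min\{p,(1-q)^{-1}\}$. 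Uniformity in $s$ is secured by the single inequality $\|E\|_{s'}\le2\pi\|E\|_\infty$, and uniformity in $\psi$ by the regime $\varepsilon_m\le(1-q)/2$ that is in force as $n-p\to\infty$.
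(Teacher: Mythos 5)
Your proof is correct and takes essentially the same route as the paper: the identical kernel splitting $\Psi_{n,p,\beta}=\frac{\psi(m)}{q^{m}}P_{n,p,q,\beta}+E$ with $m=n-p+1$, H\"older for $U_s^0$, orthogonality to degree-$(n-p)$ polynomials plus Jackson for $H_\omega$, and the same two-regime bound on the error kernel (crude $\varepsilon_m(1-q)^{-2}$ via $\tau\leqslant 1$, refined $\varepsilon_m p^{-1}(1-q)^{-3}$ via the transition weights $(l+1)/p$), whose minimum gives exactly the factor $\min\{p,(1-q)^{-1}\}$ in the statement. Two harmless imprecisions do not affect this: the asserted exact duality $\mathcal{E}=\frac{1}{\pi}\|\Psi_{n,p,\beta}\|_{s'}$ is really the distance of the kernel to constants in $L_{s'}$ rather than its norm, but only the H\"older upper bound on the $E$-convolution is ever used (as in the paper's inequality (\ref{23.05.11-15:25:13})); and the termwise claim $|r_j|\leqslant c\,\varepsilon_m(j-m)q^{j-m-1}$ is literally false, though the sum-level statement you actually invoke is valid because $1-q-\varepsilon_m\geqslant (1-q)/2$ makes every geometric-type sum with ratio $q+\varepsilon_m$ comparable, up to an absolute constant, to the corresponding sum with ratio $q$.
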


\begin{proof} Let  $f\in C^{\psi}_{\beta}\mathfrak{N},$ $\psi \in D_q$,
$q\in(0,1)$ and $\mathfrak{N}=U_s^0,$ \mbox{$1\leqslant s\leqslant \infty$,} or
$\mathfrak{N}=H_\omega .$ By (\ref{1.06.11-15:14:39}) and (\ref{19.07.11-15:09:23}), the
deviation
    $$\rho _{n,p}(f;x):=f(x)-V_{n,p}(f;x),$$
satisfies at each point $x\in\mathbb{R}$ the equality
\begin{equation}\label{23.05.11-14:03:05}
    \rho _{n,p}(f;x)=\frac{1}{\pi }\int\limits_{-\pi }^{\pi }f^\psi _\beta  (x-t)\sum\limits_{
    k=n-p+1}^{\infty}\tau _{n,p}(k)\psi (k)\cos\Big(kt-\frac{\beta\pi }{2}\Big)\,dt,\ \ f^\psi _\beta
    \in \mathfrak{N},
\end{equation}
where
\begin{equation}\label{7.06.11-16:49:50}
\tau _{n,p}(k)=
  \begin{cases}
    1-\frac{n-k}{p}, & n-p+1\leqslant k\leqslant n-1,\\
    1, & k\geqslant n.
  \end{cases}
\end{equation}
Setting
\begin{equation}\label{23.05.11-14:08:26}
    r_{n,p}(t):=\sum\limits_{k=n-p+2}^{\infty}\tau _{n,p}(k)
    \bigg(\frac{
    \psi (k)}{\psi (n-p+1)}-\frac{q^{k}}{q^{n-p+1}}\bigg)\cos\Big(kt-\frac{\beta\pi }{2
    }\Big),
\end{equation}
we rewrite (\ref{23.05.11-14:03:05}) thus:
\begin{equation}\label{23.05.11-14:12:28}
    \rho _{n,p}(f;x)=\psi (n-p+1)\bigg(
    \frac{q^{p-n-1}}{\pi }\int\limits_{-\pi }^{\pi }
    f^\psi _\beta  (x-t)\sum\limits_{k=n-p+1}^{\infty}\tau _{n,p}(k)q^k\cos\Big(kt-\frac{\beta\pi }{2
    }\Big)\,dt$$
    $$+\frac{1}{\pi }\int\limits_{-\pi }^{\pi }f^\psi _\beta (x-t)r_{n,p}(t)\,dt\bigg).
\end{equation}
Since, by virtue of (\ref{23.05.11-14:03:05}),
    $$\mathcal{E}(C^q _{\beta}\mathfrak{N};V_{n,p}):=
    \sup\limits_{f\in C^q _{\beta}\mathfrak{N}}{\|\rho _{n,p}(f;\cdot)\|}_C$$
    \begin{equation}\label{27.09.11-12:43:31}
    =\sup\limits_{\varphi \in \mathfrak{N}}\bigg\|\frac{1}{\pi }\int\limits_{-\pi }^{\pi }
    \varphi (\cdot-t)\sum\limits_{k=n-p+1}^{\infty}\tau _{n,p}(k)q^k\cos\Big(kt-\frac{\beta\pi }{2
    }\Big)\,dt\bigg\|_C,\ \ \mathfrak{N}\subset L_1,
    \end{equation}
it follows from (\ref{23.05.11-14:12:28}) and (\ref{27.09.11-12:43:31}) that
\begin{equation}\label{23.05.11-14:23:30}
    \mathcal{E}(C^\psi _{\beta}\mathfrak{N};V_{n,p})=\psi (n-p+1)\bigg(
    \frac{\mathcal{E}(C^q _{\beta}\mathfrak{N};V_{n,p})}{q^{n-p+1}}+O(1)
    \sup\limits_{\varphi \in \mathfrak{N}}{}\bigg\|\int\limits_{-\pi }^{\pi }\varphi
    (\cdot-t)r_{n,p}(t)\,dt\bigg\|_C
    \bigg).
\end{equation}
If $\mathfrak{N}=U_s^0$, $1\leqslant s\leqslant \infty,$  we get from the H\"{o}lder
inequality (see, e.g., \cite[p. 410]{KORNEICHUK_Exact_Constant})
\begin{equation}\label{23.05.11-15:25:13}
    \sup\limits_{\varphi \in U_s^0}\bigg|
    \int\limits_{-\pi }^{\pi }\varphi (x-t)r_{n,p}(t)\,dt\bigg|\leqslant \|r_{n,p}(\cdot)\|_{s'},\ \
    \frac{1}{s}+\frac{1}{s'}=1.
\end{equation}
If $\mathfrak{N}=H_\omega $, then considering that the function $r_{n,p}(t)$ (see
(\ref{23.05.11-14:08:26})) and a random trigonometric polynomial $T_{n-p}(\cdot)$ of order
not more than $n-p$ are orthogonal, we can write
    $$\sup\limits_{\varphi \in H_\omega }{}\bigg|
    \int\limits_{-\pi }^{\pi }\varphi (x-t)r_{n,p}(t)\,dt\bigg|=
    \sup\limits_{\varphi \in H_\omega }{}\bigg|
    \int\limits_{-\pi }^{\pi }(\varphi (x-t)-T_{n-p}(x-t))r_{n,p}(t)\,dt\bigg|$$
    \begin{equation}\label{23.05.11-15:33:23}
    \leqslant \sup\limits_{\varphi \in H_\omega }\|\varphi (\cdot)-
    T_{n-p}(\cdot)\|_C\|r_{n,p}(\cdot)\|_1.
    \end{equation}
Let $T_{n-p}^*(\cdot)$ be the polynomial of best uniform approximation of the function
$\varphi \in H_\omega $ by means of trigonometric polynomials of order $\leqslant n-p:$
    $$\|\varphi(\cdot)-T_{n-p}^*(\cdot) \|_C=\inf\limits_{
    T_{n-p}}{\|\varphi (\cdot)-T_{n-p}(\cdot)\|_C}=:E_{n-p+1}(\varphi ).$$
Then, by choosing $T_{n-p}^*(\cdot)$ as the polynomial $T_{n-p}(\cdot)$ in
(\ref{23.05.11-15:33:23}) and using the well-known Jackson inequality (see, for example,
\cite[p. 266]{KORNEICHUK_Exact_Constant})
    $$E_{n-p+1}(\varphi )\leqslant K\omega \Big(\varphi ,\frac{1}{n-p+1}\Big),\ \ \ K=\text{const},$$
we get from (\ref{23.05.11-15:33:23}) the estimate
\begin{equation}\label{23.05.11-15:48:07}
    \sup\limits_{\varphi \in H_\omega }{}\bigg|
    \int\limits_{-\pi }^{\pi }\varphi (x-t)r_{n,p}(t)\,dt\bigg|=O(1)
    \omega \Big(\frac{1}{n-p+1}\Big)\|r_{n,p}(\cdot)\|_1.
\end{equation}
We show that
\begin{equation}\label{7.06.11-17:08:12}
    r_{n,p}(t)=O(1)\frac{\varepsilon _{n-p+1}}{(1-q)^2}\min\limits{\bigg\{1,\frac{1}{
    p(1-q)}\bigg\}},\ \ n-p\to\infty,
\end{equation}
where $O(1)$ is the quantity uniformly bounded in $t,$ $n$, $p$,  $q$, $\psi $ and $\beta .$

To do this we first rewrite (\ref{23.05.11-14:08:26}) in the form
    $$r_{n,p}(t)=\sum\limits_{k=n-p+2}^{\infty}\tau _{n,p}(k)
    \bigg(\prod\limits_{l=0}^{k-n+p-2}\frac{
    \psi (n-p+2+l)}{\psi (n-p+1+l)}-\frac{q^{k}}{q^{n-p+1}}\bigg)\cos\Big(kt-\frac{\beta\pi }{2
    }\Big).$$
Since $\tau _{n,p}(k)>0$,
    $$
    |r_{n,p}(t)|\leqslant \sum\limits_{k=1}^{\infty}\tau _{n,p}(n-p+1+k)
    \bigg|\prod\limits_{l=0}^{k-1}\frac{
    \psi (n-p+2+l)}{\psi (n-p+1+l)}-q^k\bigg|.
    $$
By the estimate
    \begin{equation}\label{6.06.11-15:22:48}
    \bigg|\prod\limits_{l=0}^{k-1}\frac{
    \psi (m+l+1)}{\psi (m+l)}-q^k\bigg|\leqslant(q+\varepsilon _m)^k-q^k,\ \ m\in\mathbb{N},
    \end{equation}
proved in \cite[p. 438]{STEANETS_SERDYUK}, this implies that
    \begin{equation}\label{6.06.11-15:22:31}
    |r_{n,p}(t)|\leqslant \sum\limits_{k=1}^{\infty}\tau _{n,p}(n-p+1+k)\Big((q+\varepsilon _{n-p+1})^k-q^k\Big).
    \end{equation}
The sequence $\varepsilon _{m}$ tends monotonically to zero. Hence, for sufficiently large
$n-p+1,$
    \begin{equation}\label{27.10.11-10:56:45}
    \varepsilon _{n-p+1}<\frac{1-q}{2}.
    \end{equation}
Therefore, taking into account the fact that $\tau _{n,p}(k)\leqslant 1$ and using the
formula
\begin{equation}\label{21.06.11-15:40:12}
    \sum\limits_{k=1}^{\infty}x^k=\frac{x}{1-x},\ \ 0<x<1,
\end{equation}
from (\ref{6.06.11-15:22:31}) we have
    \begin{equation}\label{18.05.11-11:44:42}
    |r_{n,p}(t)|\leqslant \frac{\varepsilon _{n-p+1}}{(1-q)(1-q-\varepsilon _{n-p+1})}
    <2\frac{\varepsilon _{n-p+1}}{(1-q)^2}.
    \end{equation}
On the other hand, from (\ref{6.06.11-15:22:31}) and (\ref{7.06.11-16:49:50}) we obtain
    $$
    |r_{n,p}(t)|\leqslant \sum\limits_{k=1}^{\infty}\tau _{n,p}(n-p+1+k)
    \Big((q+\varepsilon _{n-p+1})^k-q^k\Big)$$
    $$=\sum\limits_{k=1}^{p-2}\frac{k+1}{p}\Big((q+\varepsilon _{n-p+1})^k-q^k\Big)+
    \sum\limits_{k=p-1}^{\infty}\Big((q+\varepsilon _{n-p+1})^k-q^k\Big)$$
\begin{equation}\label{6.06.11-16:06:34}
<\sum\limits_{k=1}^{\infty}\frac{k+1}{p}\Big((q+\varepsilon _{n-p+1})^k-q^k\Big) .
\end{equation}
Estimate (\ref{6.06.11-16:06:34}) together with the equality
    $$\sum\limits_{k=1}^{\infty}kx^k=\frac{x}{(1-x)^2},\ \ 0<x<1$$
and (\ref{27.10.11-10:56:45}) imply that for sufficiently large $n-p+1$
$$
    |r_{n,p}(t)|<2\frac{\varepsilon _{n-p+1}}{p}\frac{(1-q-\frac{\varepsilon _{n-p+1}}{2}
    )}{(1-q)^2(1-q-\varepsilon _{n-p+1})^2}<\frac{8}{p}\frac{\varepsilon _{n-p+1}}{(1-q)^3}.
$$
In combination with (\ref{18.05.11-11:44:42}) this yields estimate (\ref{7.06.11-17:08:12}).

Gathering together (\ref{23.05.11-14:23:30}), (\ref{23.05.11-15:25:13}),
(\ref{23.05.11-15:48:07}) and (\ref{7.06.11-17:08:12}) we obtain Theorem \ref{t1}.
\end{proof}

The quantity $pq^{-(n-p+1)}\mathcal{E}(C^q_{\beta ,s};V_{n,p})$ is bounded above and below
by some positive numbers, possibly depending only on $q$ and $s$. Indeed, on the strength of
(\ref{27.09.11-12:43:31}),
    $$\mathcal{E}(C^q_{\beta ,s};V_{n,p})=
    \sup\limits_{\varphi \in U_s^0}\bigg\|\frac{1}{\pi }\int\limits_{-\pi }^{\pi }
    \varphi (\cdot-t)\sum\limits_{k=n-p+1}^{\infty}\tau _{n,p}(k)
    q^k\cos\Big(kt-\frac{\beta\pi }{2
    }\Big)\,dt\bigg\|_C$$
    $$\leqslant C_s^{(1)}\bigg\|\sum\limits_{k=n-p+1}^{\infty}\tau _{n,p}(k)
    q^k\cos\Big(kt-\frac{\beta\pi }{2
    }\Big)\bigg\|_{s'}.$$
Since
    \begin{equation}\label{27.09.11-10:13:20}
    \sum\limits_{k=n-p+1}^{\infty}\tau _{n,p}(k)q^k<\frac{1}{p}\sum\limits_{k=1}^{\infty}kq^{k+n-p}=
    \frac{1}{p}\frac{q^{n-p+1}}{(1-q)^2}
    \end{equation}
by (\ref{7.06.11-16:49:50}), we conclude that
    $$pq^{-(n-p+1)}\mathcal{E}(C^q_{\beta ,s};V_{n,p})\leqslant \frac{C_s^{(1)}}{(1-q)^2}.$$

To find a lower estimate of the quantity $pq^{-(n-p+1)}\mathcal{E}(C^q_{\beta ,s};V_{n,p}),$
it is sufficient to consider the function
    $$f_{n-p+1}(x)=q^{n-p+1}\|\sin t\|_s^{-1}\sin\Big((n-p+1)x+\frac{\beta \pi }{2}\Big).$$
The function $f_{n-p+1}(x)$ belongs to $C^q_{\beta ,s}$ and so
    $$pq^{-(n-p+1)}\mathcal{E}(C^q_{\beta ,s};V_{n,p})\geqslant pq^{-(n-p+1)}
    \|\rho_{n,p} (f_{n-p+1};\cdot)\|_C=\frac{\|\sin t\|_C}{\|\sin t\|_s}=C^{(2)}_s>0.$$

Thus,
\begin{equation}\label{26.09.11-13:47:23}
    C^{(1)}_s\leqslant pq^{-(n-p+1)}\mathcal{E}(C^q_{\beta ,s};V_{n,p})\leqslant C^{(2)}_s\frac{
    1}{(1-q)^2}, \ \ C_s^{(i)}>0,\ \ i=1,2.
\end{equation}

An analogous estimate also holds for $pq^{-(n-p+1)}\mathcal{E}(C^q_{\beta}H_\omega
;V_{n,p}):$
\begin{equation}\label{26.09.11-13:51:46}
    C^{(1)}_s\omega \Big(\frac{1}{n-p+1}\Big)\leqslant
    pq^{-(n-p+1)}\mathcal{E}(C^q_{\beta}H_\omega ;V_{n,p})\leqslant
    \frac{C^{(2)}_s}{(1-q)^2}\omega \Big(\frac{1}{n-p+1}\Big),
\end{equation}
where $C^{(i)}_s>0,$ $i=1,2.$

Indeed, since the function $\sum\limits_{k=n-p+1}^{\infty}\tau
_{n,p}(k)q^k\cos\Big(kt-\frac{\beta\pi }{2}\Big)$ is orthogonal to every trigonometric
polynomial $T_{n-p}(\cdot)$ of order $\leqslant n-p$, from (\ref{27.09.11-12:43:31}) we have
    \begin{equation}\label{27.09.11-10:06:04}
    \mathcal{E}(C^q_\beta H_\omega ;V_{n,p})\leqslant C_s^{(1)}
    \sup\limits_{\varphi \in H_\omega }{\|\varphi (\cdot)-T_{n-p}(\cdot)\|_C}\bigg\|\sum\limits_{k=n-p+1}^{\infty}\tau _{n,p}(k)
    q^k\cos\Big(kt-\frac{\beta\pi }{2
    }\Big)\bigg\|_{1}.
    \end{equation}
Choosing the polynomial of best approximation of the function $\varphi\in H_\omega $ as
$T_{n-p}(\cdot)$ in (\ref{27.09.11-10:06:04}) and applying the Jackson inequality and
(\ref{27.09.11-10:13:20}), we obtain
\begin{equation}\label{27.09.11-11:26:09}
    pq^{-(n-p+1)}\mathcal{E}(C^q_{\beta}H_\omega ;V_{n,p})\leqslant
    \frac{C^{(1)}_s}{(1-q)^2}\omega \Big(\frac{1}{n-p+1}\Big).
\end{equation}
On the other hand,
    \begin{equation}\label{27.09.11-11:10:21}
    pq^{-(n-p+1)}\mathcal{E}(C^q_{\beta}H_\omega ;V_{n,p})\geqslant
    q^{-(n-p+1)}\mathcal{E}(C^q_{\beta}H_\omega ;V_{n,p})\geqslant q^{-(n-p+1)}
    E_{n-p+1}(C^q_\beta H_\omega ),
    \end{equation}
where $E_{n-p+1}(C^q_\beta H_\omega ) = \sup\limits_{f\in C^q_\beta H_\omega
}{\inf\limits_{T_{n-p}}{\|f(\cdot)-T_{n-p}(\cdot)\|_C}}.$ As follows from formula (8) in
\cite{Serdyuk_Sokolenko_JAT}, the next estimate holds for the quantity $E_{n-p+1}(C^q_\beta
H_\omega ):$
\begin{equation}\label{27.09.11-11:18:47}
    E_{n-p+1}(C^q_\beta H_\omega)\geqslant C_s^{(2)}q^{n-p+1}\omega \Big(\frac{1}{
    n-p+1}\Big).
\end{equation}
Comparing (\ref{27.09.11-11:26:09})--(\ref{27.09.11-11:18:47}), we get
 (\ref{26.09.11-13:51:46}).

Since $\varepsilon _{n-p+1}\to 0$ as $n-p\to\infty$, in view of (\ref{26.09.11-13:47:23})
and (\ref{26.09.11-13:51:46}) we conclude that in all cases where the asymptotic equalities
for $\mathcal{E}(C^q_{\beta ,s};V_{n,p})$ and $\mathcal{E}(C^q_{\beta}H_\omega ;V_{n,p})$
are known, relations (\ref{23.05.11-17:21:26}) and (\ref{23.05.11-17:23:37}) let us write
the analogous equalities for the quantities $\mathcal{E}(C^\psi _{\beta ,s};V_{n,p})$ and
$\mathcal{E}(C^\psi _{\beta}H_\omega ;V_{n,p})$, respectively, for any $\psi \in D_q,$
$q\in(0,1).$

This fact enables us to give some important corollaries from Theorem \ref{t1}. With this
aim, we cite one of the results from \cite[p. 1943]{Serdyuk 2010}, where it was shown that
for $q\in(0,1),$ $\beta \in\mathbb{R},$ \mbox{$1\leqslant s\leqslant \infty$} and
$n,p\in\mathbb{N}$, $p\leqslant n,$ the following asymptotic equality holds as
$n-p\to\infty:$
\begin{equation}\label{21.06.11-13:12:35}
    \mathcal{E}(C^q _{\beta,s};V_{n,p})=\frac{
    q^{n-p+1}}{p}\bigg(\frac{
    \|\cos t\|_{s'}}{\pi ^{1+1/s'}}K_{q,p}(s')+\frac{O(1)}{(n-p+1)(1-q)^{\sigma
    (s',p)}}\bigg),
\end{equation}
in which
    \begin{equation}\label{21.06.11-13:57:09}
    K_{q,p}(s'):=2^{-1/s'}\bigg\|\frac{
    \sqrt{1-2q^p\cos pt+q^{2p}}}{1-2q\cos t+q^2}\bigg\|_{s'},\ \ s'=\frac{s}{s-1},
    \end{equation}
    \begin{equation}\label{21.06.11-13:57:22}
    \sigma (s',p)=
  \begin{cases}
    1, & s'=1,\ \ p=1 , \\
    2, & 1<s'\leqslant \infty,\ \ p=1, \\
    3, & 1\leqslant s'\leqslant \infty,\ \ p\in\mathbb{N}\setminus \{1\},
  \end{cases}
    \end{equation}
and $O(1)$ is the quantity uniformly bounded in $n,$ $p,$ $q,$ $\beta $ and $s.$

For $s=\infty$ asymptotic equality (\ref{21.06.11-13:12:35}) was obtained in \cite{Serdyuk
V-P}.

Combining (\ref{23.05.11-17:21:26}) and (\ref{21.06.11-13:12:35}), we have.

\begin{thrm}\label{t2}
Let $\psi\in D_q$, $q\in(0,1),$ $1\leqslant s\leqslant \infty,$ $\beta \in \mathbb{R},$
$n,p\in\mathbb{N},$ $p\leqslant n.$ Then the following asymptotic equality holds as
$n-p\to\infty:$
\begin{equation}\label{21.06.11-13:54:59}
    \mathcal{E}(C^\psi _{\beta,s};V_{n,p})=
    \frac{\psi (n-p+1)}{p}\Bigg(\frac{
    \|\cos t\|_{s'}}{\pi ^{1+1/s'}}K_{q,p}(s')$$
    $$+O(1)\bigg(\frac{1}{(n-p+1)(1-q)^{\sigma
    (s',p)}}+\frac{\varepsilon _{n-p+1}}{(1-q)^2 }\min\Big\{p,\frac{1}{1-q}\Big\}\bigg)\Bigg),
\end{equation}
where $K_{q,p}(s')$ and $\sigma(s',p)$ are defined by $(\ref{21.06.11-13:57:09})$ and
$(\ref{21.06.11-13:57:22})$, respectively, $s'=\frac{s}{s-1},$ $\varepsilon
_{n-p+1}=\sup\limits_{k\geqslant n-p+1}{\big|\frac{ \psi (k+1)}{\psi (k)}-q\big|},$ and
$O(1)$ is the quantity uniformly bounded in $n,$ $p$, $q$, $s,$ $\psi $ and $\beta.$
\end{thrm}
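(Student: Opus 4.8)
The plan is to derive Theorem~\ref{t2} by a direct substitution, combining the identity~(\ref{23.05.11-17:21:26}) just established in Theorem~\ref{t1} with the known asymptotic equality~(\ref{21.06.11-13:12:35}). The crucial point is that~(\ref{23.05.11-17:21:26}) expresses $\mathcal{E}(C^\psi_{\beta,s};V_{n,p})$ entirely in terms of the normalised quantity $pq^{-(n-p+1)}\mathcal{E}(C^q_{\beta,s};V_{n,p})$, while~(\ref{21.06.11-13:12:35}) furnishes precisely an asymptotic formula for this same quantity. Thus no new analysis is required; the task is purely one of algebraic substitution followed by careful bookkeeping of the remainder terms.

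First I would rewrite~(\ref{21.06.11-13:12:35}) by multiplying through by $pq^{-(n-p+1)}$, so as to isolate exactly the expression occurring in~(\ref{23.05.11-17:21:26}):
\begin{equation*}
    \frac{p\,\mathcal{E}(C^q_{\beta,s};V_{n,p})}{q^{n-p+1}}=\frac{\|\cos t\|_{s'}}{\pi^{1+1/s'}}K_{q,p}(s')+\frac{O(1)}{(n-p+1)(1-q)^{\sigma(s',p)}}.
\end{equation*}
Substituting this in place of the first summand inside the large parentheses of~(\ref{23.05.11-17:21:26}), and retaining the common prefactor $\psi(n-p+1)/p$, yields an equality whose leading term is exactly $\frac{\|\cos t\|_{s'}}{\pi^{1+1/s'}}K_{q,p}(s')$ and whose remainder is the sum of the two contributions $\frac{O(1)}{(n-p+1)(1-q)^{\sigma(s',p)}}$ and $O(1)\frac{\varepsilon_{n-p+1}}{(1-q)^2}\min\{p,\frac{1}{1-q}\}$.

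The final step is to collect these two remainder terms under a single $O(1)$ factor multiplying their (nonnegative) sum, which is precisely the shape claimed in~(\ref{21.06.11-13:54:59}). The only point demanding attention — and the main, if modest, obstacle — is confirming that the merged $O(1)$ is still uniformly bounded in $n$, $p$, $q$, $s$, $\psi$ and $\beta$. This follows at once from the uniformity already attached to each piece: the $O(1)$ in~(\ref{23.05.11-17:21:26}) is uniform in $n,p,q,s,\psi,\beta$, whereas the $O(1)$ in~(\ref{21.06.11-13:12:35}) is uniform in $n,p,q,\beta,s$ and does not involve $\psi$ at all. Since a sum of two uniformly bounded quantities, each weighted by a nonnegative factor, remains uniformly bounded, the combined constant inherits the stated uniformity, and the asymptotic equality~(\ref{21.06.11-13:54:59}) follows.
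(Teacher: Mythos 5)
Your proposal is correct and matches the paper's own derivation: the paper proves Theorem~\ref{t2} precisely by combining (\ref{23.05.11-17:21:26}) from Theorem~\ref{t1} with the known asymptotic equality (\ref{21.06.11-13:12:35}), which is exactly your substitution. Your additional bookkeeping of the merged $O(1)$ and its uniformity in $n$, $p$, $q$, $s$, $\psi$, $\beta$ is sound and simply makes explicit what the paper leaves implicit.
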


Note that in the case where $p=1$ and $s\in[1,\infty],$ equality (\ref{21.06.11-13:54:59})
was established in \cite[p. 1289]{Serdyuk 2005}.

From the obvious relations
    $$1-q^p\leqslant \sqrt{1-2q^p\cos pt+q^{2p}}\leqslant 1+q^p$$
we can write that for $s=\infty$
    \begin{equation}\label{27.09.11-15:54:49}
    K_{q,p}(s')=K_{q,p}(1)=\int\limits_{0}^{\pi }\frac{
    \sqrt{1-2q^p\cos pt+q^{2p}}}{1-2q\cos t+q^2}\,dt=\frac{1}{1-q^2}(\pi +
    O(1)q^p).
    \end{equation}
Thus, from (\ref{21.06.11-13:54:59}) and (\ref{27.09.11-15:54:49}) we obtain the next
asymptotic equality as $n-p\to\infty$ and $p\to\infty:$
\begin{equation}\label{27.09.11-15:58:38}
    \mathcal{E}(C^\psi _{\beta ,\infty};V_{n,p})=
    \frac{\psi (n-p+1)}{p}\Bigg(
    \frac{4}{\pi (1-q^2)}$$
    $$+O(1)\bigg(
    \frac{q^p}{1-q}+\frac{1}{(n-p+1)(1-q)^{\sigma (1,p)}}+\frac{
    \varepsilon _{n-p+1}}{(1-q)^3}\bigg)\Bigg),
\end{equation}
where $\psi \in D_q$, $q\in(0,1),$ $\beta \in \mathbb{R}$, $\sigma (1,p)$ is defined by
(\ref{21.06.11-13:57:22}) and $O(1)$ is the quantity uniformly bounded in $n,p,q,\psi $ and
$\beta. $ Equality (\ref{27.09.11-15:58:38}) improves (\ref{4.08.11-16:58:12}) at the cost
of more precise estimate of the remainder term, it still remains asymptotic even though
restriction (\ref{4.08.11-17:17:19}) doesn't hold.

In the case of arbitrary $p=1,2,\ldots,n$ the behaviour of the constant $K_{q,p}(1)$ could
be inferred by the next identity, proved in \cite[p. 215]{Savchuks and Chaichenko}:
\begin{equation}\label{27.09.11-16:13:56}
    K_{q,p}(1)=2\frac{1-q^{2p}}{1-q^2}\textbf{K}(q^p),
\end{equation}
where $\textbf{K}(\rho )=\int\limits_{0}^{\pi /2}\frac{dt}{\sqrt{1-\rho ^2\sin^2 t}}$ is the
complete elliptic integral of the first kind. Taking (\ref{21.06.11-13:54:59}) and
(\ref{27.09.11-16:13:56}) together, we get that for any $\psi \in D_q$, $q\in(0,1),$ $\beta
\in \mathbb{R}$ and $n,p\in \mathbb{N}$ the asymptotic equality
\begin{equation}\label{27.09.11-16:19:51}
    \mathcal{E}(C^\psi _{\beta ,\infty};V_{n,p})=
    \frac{\psi (n-p+1)}{p}\Bigg(\frac{8}{\pi ^2}\frac{
    1-q^{2p}}{1-q^2}\textbf{K}(q^p)$$
    $$+O(1)\bigg(\frac{1}{(n-p+1)(1-q)^{\sigma (1,p)}}+\frac{
    \varepsilon _{n-p+1}}{(1-q)^2}\min\Big\{p,\frac{1}{1-q}\Big\}\bigg)\Bigg)
\end{equation}
is true as $n-p\to\infty$.

In the case $p=1$ equality (\ref{27.09.11-16:19:51}) was proved in \cite[p.
443]{STEANETS_SERDYUK}.

An analog of (\ref{27.09.11-16:19:51}) can be obtained for the class $C^\psi _\beta H_\omega
$ given by convex modulus of continuity $\omega (t).$ To this end, we use the following
equality (see \cite[p. 5]{Serdyuk_Ovsii})
\begin{equation}\label{28.09.11-11:05:42}
    \mathcal{E}(C^q_\beta H_\omega ;V_{n,p})=
    \frac{q^{n-p+1}}{p}\Bigg(
    \frac{4}{\pi ^2}\frac{1-q^{2p}}{1-q^2}\textbf{K}(q^p)\int\limits_{0}^{\pi /2}
    \omega \Big(\frac{2t}{n-p+1}\Big)\sin t\,dt
    $$
    $$+\frac{O(1)\omega (\pi )}{(1-q)^{\gamma   (p)}(n-p+1)}\Bigg), \ \ \ n-p\to\infty,
\end{equation}
valid for every $q\in(0,1)$, $\beta \in\mathbb{R}$ and every convex modulus of continuity
$\omega (t)$, in which
  \begin{equation}\label{28.09.11-11:12:43}
  \gamma (p)=\begin{cases}
    2, & p=1, \\
    3, & p=2,3,\ldots,n,
  \end{cases}
  \end{equation}
and the quantity $O(1)$ is uniformly bounded in $n$, $p$, $q$, $\beta $ and $\omega.$

Noting that (\ref{28.09.11-11:05:42}) is an asymptotic equality if and only if $\omega (t)$
satisfies the condition
\begin{equation}\label{28.09.11-11:14:59}
    \lim\limits_{t\to 0}{\frac{\omega (t)}{t}}=\infty,
\end{equation}
on the basis of (\ref{23.05.11-17:23:37}) and (\ref{28.09.11-11:05:42}) we arrive at the
following assertion.

\begin{thrm}\label{t3} Let $\psi\in D_q$, $q\in(0,1)$, \mbox{$n,p\in\mathbb{N}$}, $p\leqslant
n$ and let $\omega (t)$ be a convex modulus of continuity satisfying condition
$(\ref{28.09.11-11:14:59})$. Then, as \mbox{$n-p\to\infty,$}
\begin{equation}\label{28.09.11-11:28:32}
    \mathcal{E}(C^\psi_{\beta}H_\omega ;V_{n,p})=
    \frac{\psi (n-p+1)}{p}\Bigg(
    \frac{4}{\pi ^2}\frac{1-q^{2p}}{1-q^2}{\bf K}(q^p)\int\limits_{
    0}^{\pi /2}\omega \left(\frac{2t}{n-p+1}\right)\sin t\,dt
    $$
    $$+O(1)\bigg(\frac{\omega (\pi )}{(1-q)^{\gamma(p)}(n-p+1)}+
    \frac{\varepsilon _{n-p+1}}{(1-q)^2}\min\Big\{p, \frac{1}{1-q}\Big\}\omega\Big(\frac{1}{n-p+1}\Big)\bigg)\Bigg),
\end{equation}
where ${\bf K}(\rho )$ is the complete elliptic integral of the first kind, $\gamma (p)$ is
defined by $(\ref{28.09.11-11:12:43}),$ $\varepsilon _{n-p+1}=\sup\limits_{k\geqslant
n-p+1}{\big|\frac{ \psi (k+1)}{\psi (k)}-q\big|},$ and $O(1)$ is the quantity uniformly
bounded in $n,$ $p,$ $q$, $\omega $ and $\beta$.
\end{thrm}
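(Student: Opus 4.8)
The plan is to obtain (\ref{28.09.11-11:28:32}) by composing the reduction formula of Theorem~\ref{t1} with the known closed-form asymptotics for the Poisson-kernel class. I would start from equality (\ref{23.05.11-17:23:37}), which expresses $\mathcal{E}(C^\psi_{\beta}H_\omega;V_{n,p})$ through $\mathcal{E}(C^q_{\beta}H_\omega;V_{n,p})$ together with a remainder of order $\omega(\tfrac{1}{n-p+1})\,\varepsilon_{n-p+1}(1-q)^{-2}\min\{p,(1-q)^{-1}\}$. Since $\omega$ is assumed convex, formula (\ref{28.09.11-11:05:42}) is applicable and supplies the asymptotics of the Poisson-kernel quantity $\mathcal{E}(C^q_{\beta}H_\omega;V_{n,p})$.

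The second step is the substitution itself. Inserting (\ref{28.09.11-11:05:42}) into the bracket of (\ref{23.05.11-17:23:37}), the prefactor $p\,q^{-(n-p+1)}$ multiplies the external factor $q^{n-p+1}/p$ of (\ref{28.09.11-11:05:42}) and cancels. What survives is precisely the main term $\tfrac{4}{\pi^2}\tfrac{1-q^{2p}}{1-q^2}\mathbf{K}(q^p)\int_0^{\pi/2}\omega(\tfrac{2t}{n-p+1})\sin t\,dt$ together with the first remainder $\omega(\pi)\big((1-q)^{\gamma(p)}(n-p+1)\big)^{-1}$. Adding the remainder inherited from Theorem~\ref{t1} and gathering both error contributions under a single $O(1)$ produces exactly the right-hand side of (\ref{28.09.11-11:28:32}); the factor $\gamma(p)$ carries over unchanged from (\ref{28.09.11-11:12:43}).

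The only substantive point, and the step I expect to require the most care, is confirming that (\ref{28.09.11-11:28:32}) is a genuine \emph{asymptotic} equality, i.e. that the leading term dominates both remainders as $n-p\to\infty$. By monotonicity and subadditivity of $\omega$, the integral $\int_0^{\pi/2}\omega(\tfrac{2t}{n-p+1})\sin t\,dt$ is squeezed between constant multiples of $\omega(\tfrac{1}{n-p+1})$, and the prefactor $\tfrac{4}{\pi^2}\tfrac{1-q^{2p}}{1-q^2}\mathbf{K}(q^p)$ is bounded above and below by positive constants depending only on $q$; hence the main term is comparable to $\omega(\tfrac{1}{n-p+1})$. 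Condition (\ref{28.09.11-11:14:59}), namely $\omega(t)/t\to\infty$, forces $(n-p+1)\,\omega(\tfrac{1}{n-p+1})\to\infty$, so the first remainder, of order $\omega(\pi)/(n-p+1)$, is $o(\omega(\tfrac{1}{n-p+1}))$. For the second remainder the crucial observation is that the factor $\min\{p,(1-q)^{-1}\}$ stays bounded by $(1-q)^{-1}$ for fixed $q$, so this term is $O(\varepsilon_{n-p+1}\,\omega(\tfrac{1}{n-p+1}))$ with $\varepsilon_{n-p+1}\to0$, and is therefore also negligible against the main term; this is precisely the mechanism by which restriction (\ref{4.08.11-17:17:19}) is dispensed with. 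Thus the leading term dominates, and no deep obstacle arises: the assertion follows by assembling the two already-established formulas.
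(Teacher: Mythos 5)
Your proposal is correct and follows essentially the same route as the paper: the paper derives Theorem~\ref{t3} precisely by substituting the known asymptotic (\ref{28.09.11-11:05:42}) for $\mathcal{E}(C^q_{\beta}H_\omega;V_{n,p})$ into the reduction formula (\ref{23.05.11-17:23:37}) of Theorem~\ref{t1}, noting that condition (\ref{28.09.11-11:14:59}) is exactly what makes the result a genuine asymptotic equality. Your final paragraph merely spells out in detail the dominance argument (main term comparable to $\omega(\tfrac{1}{n-p+1})$, both remainders $o(\omega(\tfrac{1}{n-p+1}))$) that the paper leaves implicit.
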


Examples of convex moduli of continuity $\omega (t)$ satisfying condition
(\ref{28.09.11-11:14:59}) are the functions $\omega (t)=t^\alpha ,$ $\alpha \in(0,1),$
$\omega (t)=\ln^\beta (t+1),$ $\beta \in(0,1)$ and others. If $\omega (t)=t^\alpha ,$
$\alpha \in(0,1),$ the class $H_\omega $ turns into the well-known H\"{o}lder class
$H^\alpha .$ In this case equality (\ref{28.09.11-11:28:32}) has the form:
    $$\mathcal{E}(C^\psi _\beta H^\alpha ; V_{n,p})=\frac{
    \psi (n-p+1)}{p(n-p+1)^\alpha }\Bigg(\frac{2^{2+\alpha }}{\pi ^2}\frac{
    1-q^{2p}}{1-q^2}\textbf{K}(q^p)\int\limits_{
    0}^{\pi /2}t^\alpha \sin t\,dt$$
    $$+O(1)\bigg(
    \frac{1}{(1-q)^{\gamma (p)}(n-p+1)^{1-\alpha }}+\frac{
    \varepsilon _{n-p+1}}{(1-q)^2}\min\Big\{p, \frac{1}{1-q}\Big\}\bigg)\Bigg),\ \ n-p\to\infty.$$

Along with the Poisson kernel $P_{q,\beta }(t)$, the important examples of the kernels
$\Psi_{\beta }(t)$ (see (\ref{21.06.11-14:33:05})) whose coefficients $\psi (k)$ belong to
$D_q$, $q\in(0,1)$, are the Neumann kernel
    \begin{equation}\label{24.06.11-15:53:02}
    N_{q,\beta }(t)=\sum_{k=1}^{\infty}\frac{q^k}{k}\cos\Big(kt-\frac{\beta \pi }{2}\Big),
    \ \ q\in(0,1), \ \ \beta \in\mathbb{R}
    \end{equation}
and the polyharmonic Poisson kernel \cite[p. 256, 257]{Timan 2009}
    \begin{equation}\label{24.06.11-15:53:12}
    P_{q,\beta }(m,t)=\sum\limits_{
    k=1}^{\infty}\psi _{m}(k)\cos\Big(kt-\frac{\beta \pi }{2}\Big),\ \ \beta \in\mathbb{R},
    \end{equation}
where
$$\psi _m(k)=q^k\bigg(1+\sum\limits_{
j=1}^{m-1}\frac{(1-q^2)^j}{j!2^j}\prod_{l=0}^{j-1}(k+2l)\bigg),\ \  m\in\mathbb{N},\ \
q\in(0,1).$$

It's easy to verify that for the coefficients $\psi (k)=\frac{q^k}{k}$ of the Neumann kernel
$N_{q,\beta }(t)$ the equality
    \begin{equation}\label{21.06.11-15:14:29}
    \varepsilon _{n-p+1}=\sup\limits_{k\geqslant n-p+1}{\frac{q}{k+1}}=\frac{q}{n-p+2},
    \end{equation}
holds. As shown in \cite[p. 180]{Serdyuk_Sokolenko_BUL} (see, also, \cite[p.
132]{Serdyuk_Chaichenko}), in the case where $\psi (k)$ are the coefficients $\psi _m(k)$ of
the polyharmonic Poisson kernel $P_{q,\beta }(m,t)$,
    \begin{equation}\label{21.06.11-15:14:39}
    \varepsilon _{n-p+1}\leqslant \frac{(2m-3)q}{n-p+1},\ \ m=2,3,\ldots
    \end{equation}
(if $m=1$, then $\psi _m(k)=\psi _1(k)=q^k$ and so $\varepsilon_{n-p+1}=0$).

Thus from Theorem \ref{t2} and Theorem \ref{t3} we obtain the next assertions.

\begin{cor}\label{c1} Let $C^\psi _{\beta,s}$, $1\leqslant s\leqslant
\infty,$ and $C^\psi _\beta H_\omega $ be the classes generated by the coefficients $\psi
(k)=q^k/k$ of the Neumann kernel $N_{q,\beta }(t)$, $n,p\in\mathbb{N},$ $p\leqslant n,$ and
a convex modulus of continuity $\omega (t)$ satisfies condition $(\ref{28.09.11-11:14:59}).$
Then the following asymptotic equalities hold as $n-p\to\infty$
$$
    \mathcal{E}(C^\psi _{\beta,s};V_{n,p})=
    \frac{q^{n-p+1}}{p(n-p+1)}\Bigg(\frac{
    \|\cos t\|_{s'}}{\pi ^{1+1/s'}}K_{q,p}(s')$$
    $$+\frac{O(1)}{(n-p+1)}\bigg(
    \frac{1}{(1-q)^{\sigma
    (s',p)}}+\frac{q}{(1-q)^2}\min\Big\{p,\frac{1}{1-q}\Big\}\bigg)\Bigg),
    $$
    $$
    \mathcal{E}(C^\psi_{\beta}H_\omega ;V_{n,p})=
    \frac{q^{n-p+1}}{p(n-p+1)}\Bigg(
    \frac{4}{\pi ^2}\frac{1-q^{2p}}{1-q^2}{\bf K}(q^p)\int\limits_{
    0}^{\pi /2}\omega \left(\frac{2t}{n-p+1}\right)\sin t\,dt
    $$
    $$+\frac{O(1)}{n-p+1}\bigg(\frac{\omega (\pi )}{(1-q)^{\gamma(p)}}+
    \frac{q}{(1-q)^2}\min\Big\{p,
    \frac{1}{1-q}\Big\}\omega\Big(\frac{1}{n-p+1}\Big)\bigg)\Bigg),$$
where $K_{q,p}(s')$, $\sigma(s',p)$ and $\gamma(p)$ are defined by
 $(\ref{21.06.11-13:57:09})$, $(\ref{21.06.11-13:57:22})$ and $(\ref{28.09.11-11:12:43}),$
respectively, $s'=\frac{s}{s-1},$ and the quantities $O(1)$ are uniformly bounded in $n,$
$p$, $q$, $s$, $\omega $ and $\beta.$
\end{cor}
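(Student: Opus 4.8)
The plan is to specialize Theorem~\ref{t2} and Theorem~\ref{t3} to the sequence $\psi(k)=q^k/k$ of Fourier coefficients of the Neumann kernel $N_{q,\beta}(t)$, so that the whole argument reduces to verifying the hypotheses of those theorems and then tidying up the remainder terms.

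First I would confirm that $\psi(k)=q^k/k$ lies in $D_q$. Since
$$\frac{\psi(k+1)}{\psi(k)}=\frac{q^{k+1}/(k+1)}{q^k/k}=q\,\frac{k}{k+1}\longrightarrow q,\qquad k\to\infty,$$
condition (\ref{24.05.11-18:27:57}) holds with the prescribed $q\in(0,1)$, so both $C^\psi_{\beta,s}$ and $C^\psi_\beta H_\omega$ are admissible classes for Theorems~\ref{t2} and~\ref{t3}. I would then record the two ingredients those theorems require for this $\psi$. The factor $\psi(n-p+1)=q^{n-p+1}/(n-p+1)$ generates the prefactor $\frac{q^{n-p+1}}{p(n-p+1)}$ common to both equalities of the corollary, and for $\varepsilon_{n-p+1}$ I would invoke the explicit value (\ref{21.06.11-15:14:29}), namely $\varepsilon_{n-p+1}=q/(n-p+2)$, which follows from $|\psi(k+1)/\psi(k)-q|=q/(k+1)$ together with the monotone decay of this quantity in $k$.

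The remaining work is purely the bookkeeping of the error terms. Substituting $\psi(n-p+1)=q^{n-p+1}/(n-p+1)$ into (\ref{21.06.11-13:54:59}) reproduces the main term $\frac{\|\cos t\|_{s'}}{\pi^{1+1/s'}}K_{q,p}(s')$ inside the brackets, while the error contribution $\frac{\varepsilon_{n-p+1}}{(1-q)^2}\min\{p,1/(1-q)\}$ becomes $\frac{q}{(n-p+2)(1-q)^2}\min\{p,1/(1-q)\}$. Because $\frac{1}{n-p+2}\leqslant\frac{1}{n-p+1}$, both this term and the already present $\frac{1}{(n-p+1)(1-q)^{\sigma(s',p)}}$ can be collected under a single factor $\frac{1}{n-p+1}$, which yields the first displayed equality of the corollary; the second equality follows identically from (\ref{28.09.11-11:28:32}), the hypotheses of Theorem~\ref{t3} (convexity of $\omega$ and condition (\ref{28.09.11-11:14:59})) being inherited directly from the corollary. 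The one point meriting attention is that absorbing $q/(n-p+2)$ into $O(1)/(n-p+1)$ must not spoil the uniformity of the constants in $n,p,q,s,\omega,\beta$; this is immediate, since the multiplier $(n-p+1)/(n-p+2)$ lies in $(0,1)$ and is bounded independently of all parameters, and the two separate $O(1)$ terms coming from each theorem merge into one by taking the larger of their bounds. I expect no genuine obstacle here, as the corollary is a pure specialization rather than a fresh estimate.
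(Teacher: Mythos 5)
Your proposal is correct and follows essentially the same route as the paper: compute $\varepsilon_{n-p+1}=\sup_{k\geqslant n-p+1}\frac{q}{k+1}=\frac{q}{n-p+2}$ for $\psi(k)=q^k/k$ (the paper's equality (\ref{21.06.11-15:14:29})), substitute $\psi(n-p+1)=q^{n-p+1}/(n-p+1)$ and this value of $\varepsilon_{n-p+1}$ into Theorems~\ref{t2} and~\ref{t3}, and absorb the factor $1/(n-p+2)\leqslant 1/(n-p+1)$ into the common $O(1)/(n-p+1)$ term. Your extra care about uniformity of the merged constants is sound and consistent with the paper's claim.
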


\begin{cor}\label{c2} Let $C^\psi _{\beta,s}$, $1\leqslant s\leqslant
\infty,$ and $C^\psi _\beta H_\omega $ be the classes generated by the coefficients $\psi
(k)=\psi _m(k)$ of the polyharmonic Poisson kernel $P_{q,\beta }(m,t),$ $m\in \mathbb{N},$
$n,p\in\mathbb{N},$ $p\leqslant n,$ and a convex modulus of continuity $\omega (t)$
satisfies condition $(\ref{28.09.11-11:14:59}).$ Then the following asymptotic equalities
hold as $n-p\to\infty$
$$
    \mathcal{E}(C^\psi _{\beta,s};V_{n,p})=$$
    $$=
    \frac{q^{n-p+1}}{p}\bigg(1+\sum\limits_{
j=1}^{m-1}\frac{(1-q^2)^j}{j!2^j}\prod_{l=0}^{j-1}(n-p+1+2l)\bigg)\Bigg(\frac{
    \|\cos t\|_{s'}}{\pi ^{1+1/s'}}K_{q,p}(s')$$
    $$+\frac{O(1)}{(n-p+1)}\bigg(
    \frac{1}{(1-q)^{\sigma
    (s',p)}}+\frac{mq}{(1-q)^2}\min\Big\{p,
    \frac{1}{1-q}\Big\}\bigg)\Bigg),
    $$
    $$
    \mathcal{E}(C^\psi_{\beta}H_\omega ;V_{n,p})=
    \frac{q^{n-p+1}}{p}\bigg(1+\sum\limits_{
j=1}^{m-1}\frac{(1-q^2)^j}{j!2^j}\prod_{l=0}^{j-1}(n-p+1+2l)\bigg)$$
    $$\times\Bigg(
    \frac{4}{\pi ^2}\frac{1-q^{2p}}{1-q^2}{\bf K}(q^p)\int\limits_{
    0}^{\pi /2}\omega \left(\frac{2t}{n-p+1}\right)\sin t\,dt
    $$
    $$+\frac{O(1)}{n-p+1}\bigg(\frac{\omega (\pi )}{(1-q)^{\gamma(p)}}+
    \frac{mq}{(1-q)^2}\min\Big\{p,
    \frac{1}{1-q}\Big\}\omega\Big(\frac{1}{n-p+1}\Big)\bigg)\Bigg),$$
where $K_{q,p}(s')$, $\sigma(s',p)$ and $\gamma(p)$ are defined by
 $(\ref{21.06.11-13:57:09})$, $(\ref{21.06.11-13:57:22})$ and $(\ref{28.09.11-11:12:43}),$
respectively, $s'=\frac{s}{s-1},$ and the quantities $O(1)$ are uniformly bounded in $n,$
$p$, $q$, $m,$ $s$, $\omega $ and $\beta.$
\end{cor}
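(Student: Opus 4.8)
The plan is to obtain Corollary \ref{c2} by specializing Theorem \ref{t2} (for the classes $C^\psi_{\beta,s}$) and Theorem \ref{t3} (for the classes $C^\psi_\beta H_\omega$) to the sequence $\psi(k)=\psi_m(k)$ generated by the polyharmonic Poisson kernel $P_{q,\beta}(m,t)$ (see (\ref{24.06.11-15:53:12})). First I would record that $\psi_m(k)=q^k P_m(k)$, where $P_m(k)=1+\sum_{j=1}^{m-1}\frac{(1-q^2)^j}{j!2^j}\prod_{l=0}^{j-1}(k+2l)$ is a polynomial of degree $m-1$ in $k$ with strictly positive coefficients for $q\in(0,1)$. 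Hence $\psi_m(k)>0$ and $\psi_m(k+1)/\psi_m(k)=q\,P_m(k+1)/P_m(k)\to q$ as $k\to\infty$, so that $\psi_m\in D_q$ and Theorems \ref{t2} and \ref{t3} apply.

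Next I would substitute $\psi(n-p+1)=\psi_m(n-p+1)=q^{n-p+1}\bigl(1+\sum_{j=1}^{m-1}\frac{(1-q^2)^j}{j!2^j}\prod_{l=0}^{j-1}(n-p+1+2l)\bigr)$ into the right-hand sides of (\ref{21.06.11-13:54:59}) and (\ref{28.09.11-11:28:32}). This reproduces verbatim the prefactor $\frac{q^{n-p+1}}{p}\bigl(1+\sum_{j=1}^{m-1}\cdots\bigr)$ together with the two principal terms of the asymptotic formulas in the corollary, so it remains only to rewrite the remainders. Here I would invoke the estimate $\varepsilon_{n-p+1}\leqslant\frac{(2m-3)q}{n-p+1}$ for the polyharmonic coefficients from (\ref{21.06.11-15:14:39}). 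Inserting it into the factor $\frac{\varepsilon_{n-p+1}}{(1-q)^2}\min\{p,\frac{1}{1-q}\}$ appearing in both theorems gives $\frac{(2m-3)q}{(n-p+1)(1-q)^2}\min\{p,\frac{1}{1-q}\}$; pulling the common factor $\frac{1}{n-p+1}$ out of both remainder contributions then recasts the error as $\frac{O(1)}{n-p+1}\bigl(\frac{1}{(1-q)^{\sigma(s',p)}}+\frac{mq}{(1-q)^2}\min\{p,\frac{1}{1-q}\}\bigr)$, and analogously with $\gamma(p)$ in place of $\sigma(s',p)$ and an extra factor $\omega(\frac{1}{n-p+1})$ on the second summand in the $H_\omega$ case.

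There is essentially a single point requiring care, and it is where I expect the bookkeeping to concentrate rather than any genuine difficulty: the passage from the numerical constant $2m-3$ to the clean factor $m$ displayed in the statement, carried out so that the resulting $O(1)$ is uniform in $m$. For $m\geqslant 2$ one has $2m-3\leqslant 2m$, so $(2m-3)q/(n-p+1)\leqslant 2\,mq/(n-p+1)$ and the factor $2$ is absorbed into $O(1)$; crucially, the $O(1)$ inherited from Theorems \ref{t2} and \ref{t3} is uniform over all $\psi\in D_q$, hence automatically uniform in $m$, so no hidden $m$-dependence is introduced. The case $m=1$ is trivial: then $\psi_1(k)=q^k$, the sum in the prefactor is empty, $\varepsilon_{n-p+1}=0$, and the $\varepsilon$-contribution vanishes, so the stated bound (whose $mq$ term merely majorizes zero) holds a fortiori. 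Collecting the specialized prefactor with the rewritten remainders yields both asymptotic equalities of Corollary \ref{c2} as $n-p\to\infty$.
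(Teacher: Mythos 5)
Your proposal is correct and follows essentially the same route as the paper: the paper likewise obtains Corollary \ref{c2} by specializing Theorems \ref{t2} and \ref{t3} to $\psi(k)=\psi_m(k)$, substituting the explicit value of $\psi_m(n-p+1)$, and invoking the bound $(\ref{21.06.11-15:14:39})$ on $\varepsilon_{n-p+1}$ (with $\varepsilon_{n-p+1}=0$ when $m=1$), the constant $2m-3$ being absorbed, up to a factor bounded uniformly in $m$, into $O(1)\cdot mq$. Your additional checks — that $\psi_m\in D_q$ and that the $O(1)$ from the theorems is uniform over $\psi\in D_q$ and hence in $m$ — are exactly the points the paper leaves implicit.
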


\emph{Contact information}: \href{http://www.imath.kiev.ua/~funct}{Department of the Theory
of Functions}, Institute of Mathematics of Ukrainian National Academy of Sciences, 3,
Tereshenkivska st., 01601, Kyiv, Ukraine \vskip 0.2 cm

\href{mailto:serdyuk@imath.kiev.ua}{serdyuk@imath.kiev.ua},
\href{mailto:ievgen.ovsii@gmail.com}{ievgen.ovsii@gmail.com},
\href{mailto:andreymap@rambler.ru}{andreymap@rambler.ru}

\end{document}